\documentclass[reqno,letterpaper]{amsart}
\usepackage[vmargin=33truemm,hmargin=31truemm]{geometry}
\usepackage[svgnames]{xcolor}
\usepackage{gnuplot-lua-tikz}
\usepackage{amssymb}
\usepackage{mathtools}
\mathtoolsset{showonlyrefs, showmanualtags}
\usepackage{bm}
\usepackage{mleftright}
\usepackage{aligned-overset}
\usepackage[bottom]{footmisc}
\usepackage{comment}

\numberwithin{equation}{section}

\theoremstyle{plain}
\newtheorem{theorem}{Theorem}[section]
\newtheorem{corollary}[theorem]{Corollary}
\newtheorem{proposition}[theorem]{Proposition}

\newcommand{\R}{\mathbb{R}}
\newcommand{\C}{\mathbb{C}}
\newcommand{\Z}{\mathbb{Z}}

\newcommand{\abs}[1]{\lvert#1\rvert} 
\newcommand{\norm}[1]{\lVert#1\rVert} 

\newcommand{\Hamiltonian}{H}
\newcommand{\Hamiltonianfree}{\Hamiltonian_0}
\newcommand{\HamiltonianDirac}[1]{\Hamiltonian_{\textrm{D}, #1}}
\newcommand{\Laplacian}{\Delta}

\newcommand{\Tnu}[1]{T_{#1}}
\newcommand{\TnumDirac}[2]{\widetilde{T}_{#1, #2}}

\DeclareMathOperator*{\esssup}{ess\,sup}

\newcommand{\Const}{\bm{C}}
\newcommand{\Constd}[1]{\Const^{(#1)}}
\newcommand{\Constwpd}[3]{\Const^{(#3)}(#1, #2)}
\newcommand{\ConstwpdDirac}[4]{\Const_{\textrm{D}, #4}^{(#3)}(#1, #2)}

\newcommand{\BesselJ}[1]{J_{#1}}
\newcommand{\BesselK}[1]{K_{#1}}
\newcommand{\BesselY}[1]{Y_{#1}}

\newcommand{\BesselJY}[1]{\mathbf{J}_{#1}}
\newcommand{\crossmunu}[2]{X_{#1, #2}}

\newcommand{\intervaloo}[2]{(#1, #2)}
\newcommand{\intervalco}[2]{[#1, #2)}

\newcommand{\intervalcc}[2]{[#1, #2]}

\newcommand{\widthL}{1}
\newcommand{\dimN}{N}

\newcommand{\citeDLMF}[1]{\cite[\href{https://dlmf.nist.gov/#1}{#1}]{DLMF}}

\usepackage[numbers,sort]{natbib}

\usepackage{hyperref}
\hypersetup{
	setpagesize=false,
	bookmarksnumbered=true,
	bookmarksopen=true,
	colorlinks=true,
	allcolors=blue,
}

\title{An order-interpolation inequality for Bessel functions}
\date{}

\author{Soichiro Suzuki}
\address[Soichiro Suzuki]{Department of Mathematics, Chuo University, 1-13-27, Kasuga, Bunkyo-ku, Tokyo 112-8551, Japan}
\email{\href{mailto:soichiro.suzuki.m18020a@gmail.com}{soichiro.suzuki.m18020a@gmail.com}}

\subjclass[2020]{33C10, 35B65, 35Q41}

\keywords{Bessel function, order-interpolation, dimension-comparison.}

\begin{document}
	\begin{abstract}
		We show that $J_{\mu + \nu}(r)^2 < J_{\nu-1/2}(r)^2 + J_{\nu+1/2}(r)^2$ holds whenever $\mu \in (-1/2, 1/2)$, $\nu \in [0, \infty)$, and $r \in (0, \infty)$. 
		In fact, we prove a stronger version for any fixed non-trivial linear combination of
		the Bessel functions of the first and second kinds.
		This inequality can be regarded as a kind of interpolation with respect to order. 
		As an application, we establish a dimension-comparison result for optimal constants of smoothing estimates for the free Schr\"{o}dinger equation.
		Briefly, the optimal constant on $\mathbb{R}^{d+1}$ is at most twice that on $\mathbb{R}^d$ for each $d \geq 2$.
	\end{abstract}
	\maketitle
	\section{Introduction}
	It is classical and well known that the Bessel function of the first kind $\BesselJ{\nu}$ satisfies
	\begin{equation} \label{eq:classical bound}
		r \BesselJ{\nu}(r)^2 \leq \frac{2}{\pi}
	\end{equation}
	for every $\nu \in \intervalcc{-1/2}{1/2}$ and $r \in \intervaloo{0}{\infty}$, which was proved by \citet[Equation (18)]{Sze1933} (see also \citet[Theorem 7.31.2]{Sze1975}).
	Comparing this with Hankel's asymptotic formula \citeDLMF{10.17.3}
	\begin{equation} \label{eq:Hankel's asymptotic formula}
	r \BesselJ{\nu}(r)^2 = \frac{1}{\pi} \mleft( 1 + \sin (2r - \nu \pi) \mright) + O(r^{-1}) , \quad r \to \infty
	\end{equation}
	for each fixed $\nu \in \R$, we see that the constant $2/\pi$ in \eqref{eq:classical bound} is sharp.
	On the other hand, \citet{Sze1933} also showed that \eqref{eq:classical bound} fails for $\nu \in \R \setminus \intervalcc{-1/2}{1/2}$, even though the asymptotic formula \eqref{eq:Hankel's asymptotic formula} is valid for every $\nu \in \R$.
	Indeed, we have
	\begin{alignat}{3} 
		&\nu \in \intervaloo{1/2}{\infty} \cup \Z_{\leq -1} & &\implies& \frac{2}{\pi} < &\sup_{r \in \intervaloo{0}{\infty}} r \BesselJ{\nu}(r)^2 < \infty , 
		\label{eq:classical bound nu > 1/2} \\
		&\nu \in \intervaloo{-\infty}{-1/2} \setminus \Z_{\leq -1} & &\implies&  &\sup_{r \in \intervaloo{0}{\infty}} r \BesselJ{\nu}(r)^2 = \infty . 
		\label{eq:classical bound nu < -1/2}
	\end{alignat}
	Moreover, \citet[Section 3.2]{Lan2000} proved that the function
	\begin{equation}
		\intervalco{1/2}{\infty} \ni \nu \longmapsto \sup_{r \in \intervaloo{0}{\infty}} r \BesselJ{\nu}(r)^2
	\end{equation}
	is strictly increasing and diverges to infinity as $\nu \to \infty$.
	\citet[Theorem 3]{Kra2014} showed that
	\begin{equation} \label{eq:Kra2014 bound}
		\abs{r^2 - \nu^2 + 1/4}^{1/2} \BesselJ{\nu}(r)^2 < \frac{2}{\pi}
	\end{equation}
	holds for every $\nu \in \intervaloo{1/2}{\infty}$ and $r \in \intervaloo{0}{\infty}$, which is a natural replacement of \eqref{eq:classical bound}.
	The purpose of this paper is to give another generalization in terms of an interpolation inequality with respect to order.
	Notice that the inequality \eqref{eq:classical bound} can be rewritten as
	\begin{equation} \label{eq:classical bound rewritten}
		\BesselJ{\nu}(r)^2 \leq \BesselJ{-1/2}(r)^2 + \BesselJ{1/2}(r)^2 
	\end{equation}
	by using 
	\begin{equation} \label{eq:BesselJ for nu = pm 1/2}
		\BesselJ{-1/2}(r) = \mleft( \frac{2}{\pi r} \mright)^{1/2} \cos{r} , \quad \BesselJ{1/2}(r) = \mleft( \frac{2}{\pi r} \mright)^{1/2} \sin{r} .
	\end{equation}
	This observation leads us to the following.
	\begin{theorem} \label{thm:main thm}
		We define a family of functions $\{ \BesselJY{\nu} \}_{\nu \in \R}$ by
		\begin{equation} \label{eq:BesselJY}
			\BesselJY{\nu} \coloneqq a \BesselJ{\nu} + b \BesselY{\nu}
		\end{equation}
		for fixed $(a, b) \in \C^2 \setminus \{(0, 0)\}$, where $\BesselJ{\nu}$ and $\BesselY{\nu}$ are the Bessel functions of the first and second kinds of order $\nu$, respectively.
		Now let $\mu \in \intervaloo{-\widthL/2}{\widthL/2}$, $\nu \in \intervalco{0}{\infty}$, and $r \in \intervaloo{0}{\infty}$.
		Then we have
		\begin{equation} \label{eq:main inequality}
			\abs{\BesselJY{\mu + \nu}(r)}^2 < \abs{\BesselJY{\nu-\widthL/2}(r)}^2 + \abs{\BesselJY{\nu+\widthL/2}(r)}^2 .
		\end{equation}
	\end{theorem}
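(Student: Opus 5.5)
The plan is to fix $\nu \geq 0$ and $\mu \in \intervaloo{-1/2}{1/2}$, set $p \coloneqq \nu - 1/2$, and study the difference
\begin{equation}
	h(r) \coloneqq \abs{\BesselJY{p}(r)}^2 + \abs{\BesselJY{p+1}(r)}^2 - \abs{\BesselJY{\nu+\mu}(r)}^2
\end{equation}
as a function of $r \in \intervaloo{0}{\infty}$. The goal is to show $h>0$ by an ODE/monotonicity argument in $r$ rather than by handling the order variable directly.

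\emph{First step: a derivative identity.} Write $\abs{\BesselJY{\lambda}}^2 = \abs{a}^2\BesselJ{\lambda}^2 + 2\operatorname{Re}(a\bar b)\BesselJ{\lambda}\BesselY{\lambda} + \abs{b}^2\BesselY{\lambda}^2$, so everything is a real Hermitian form in $(\BesselJ{\lambda},\BesselY{\lambda})$. The recurrences $\BesselJY{p}' = \frac{p}{r}\BesselJY{p} - \BesselJY{p+1}$ and $\BesselJY{p+1}' = \BesselJY{p} - \frac{p+1}{r}\BesselJY{p+1}$ hold because $\BesselJ{}$ and $\BesselY{}$ satisfy the same recurrences. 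They give the clean formula
\begin{equation}
	\frac{d}{dr}\Bigl(\abs{\BesselJY{p}}^2 + \abs{\BesselJY{p+1}}^2\Bigr) = \frac{2}{r}\Bigl(p\abs{\BesselJY{p}}^2 - (p+1)\abs{\BesselJY{p+1}}^2\Bigr).
\end{equation}
For the middle order $\lambda = \nu+\mu$, I would use the Bessel equation. In the form $(r\,\BesselJY{\lambda}')' = (\lambda^2/r - r)\BesselJY{\lambda}$, it yields a Riccati/energy-type identity for $E_\lambda \coloneqq \abs{\BesselJY{\lambda}'}^2 + (1-\lambda^2/r^2)\abs{\BesselJY{\lambda}}^2$. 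The point is that $\abs{\BesselJY{p}}^2+\abs{\BesselJY{p+1}}^2$ is exactly such an ``energy'' for the order $p$, namely $\abs{\BesselJY{p}'}^2 + \ldots$ after eliminating $\BesselJY{p+1}$. The inequality then becomes a comparison between energies of two Bessel equations whose orders differ by $\abs{\mu-1/2}$ and $\abs{\mu+1/2}$, both less than $1$.

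\emph{Second step: boundary behaviour.} As $r\to\infty$, Hankel's expansion (valid for the complex combination $\BesselJY{\lambda}$, with amplitudes $\abs{a\mp ib}$ in the two exponentials) shows the following. The sum $\abs{\BesselJY{p}}^2+\abs{\BesselJY{p+1}}^2$ equals $\frac{2}{\pi r}(\abs{a}^2+\abs{b}^2)+O(r^{-2})$ without oscillation, because the phases of orders $p$ and $p+1$ differ by $\pi/2$. Meanwhile $\abs{\BesselJY{\nu+\mu}}^2$ oscillates between $\frac{1}{\pi r}(\abs{a}^2+\abs{b}^2 \pm \ldots)$. So $h>0$ for large $r$ reduces to the elementary bound $\abs{a-ib}\abs{a+ib}\le\frac{1}{2}(\abs{a-ib}^2+\abs{a+ib}^2)$, with a strict margin coming from the $O(r^{-2})$ terms that must be checked when equality holds at leading order. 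As $r\to 0^+$, the order $p+1=\nu+1/2$ is the largest. If the $\BesselY{}$-part is present, it dominates like $r^{-2(\nu+1/2)}$ against $r^{-2(\nu+\mu)}$. If $b=0$, the lowest order $p$ dominates, or for $p<0$ one uses the same comparison with $\BesselJ{-\abs{p}}$. In either case $h>0$ near $0$.

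\emph{Third step (main obstacle): excluding interior zeros.} I would argue by contradiction at a first point $r_0$ where $h(r_0)=0$, $h'(r_0)\le 0$. The derivative identity from the first step, combined with the Bessel equation for order $\nu+\mu$, should produce a Sturm-type inequality. This is where $\abs{\mu}<1/2$ is used: the potential $\lambda^2/r^2$ with $\lambda=\nu+\mu$ is strictly sandwiched between $p^2/r^2$ and $(p+1)^2/r^2$.

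I expect this comparison to be the hard part, because the sum of squares mixes two orders. What I would try first is to rewrite the right-hand side via Nicholson-type integral representations. For example, $\BesselJ{\lambda}^2+\BesselY{\lambda}^2 = \frac{8}{\pi^2}\int_0^\infty \BesselK{0}(2r\sinh t)\cosh(2\lambda t)\,dt$, together with the analogous Dixon--Ferrar formulas for $\BesselJ{\lambda}\BesselY{\lambda}$ and $\BesselJ{\lambda}^2$. The goal is to express $h$ as an integral whose kernel involves $\cosh(2pt)+\cosh(2(p+1)t)-2\cosh(2\lambda t)$ or its trigonometric analogue $\cos$. For $\lambda$ strictly inside $(p,p+1)$ this kernel has a definite sign by convexity, averaged against the known positivity of the $\BesselK{0}$ kernel. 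If that route fails for the $\BesselJ{}\BesselY{}$ cross term, I would fall back on the contradiction argument above.
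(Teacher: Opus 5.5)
\textbf{There is a genuine gap: your proposal is not yet a proof.} Everything hinges on your third step, and neither mechanism you offer for it works as stated.

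\emph{The integral-representation route covers only Hankel combinations.} It succeeds only when $|\mathbf{J}_\lambda|^2$ is a multiple of $J_\lambda^2+Y_\lambda^2$, i.e.\ for $b=\pm ia$. In that case Nicholson's formula and $\cosh(2\lambda t)\le\max\{\cosh(2pt),\cosh(2(p+1)t)\}$ finish at once. (The kernel should be $\cosh(2pt)+\cosh(2(p+1)t)-\cosh(2\lambda t)$; with your factor $2$ it is not sign-definite.) For any other $(a,b)$ the convexity mechanism cannot apply. For $b=0$, a representation $J_\lambda(r)^2=\int_0^\infty w_r(t)\cosh(2\lambda t)\,dt$ with $w_r\ge0$ would make $\lambda\mapsto J_\lambda(r)^2$ nondecreasing in $|\lambda|$. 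That contradicts $J_\lambda(r)\to0$ as $\lambda\to\infty$.

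Writing $\mathbf{J}_\lambda=\frac{a-ib}{2}H^{(1)}_\lambda+\frac{a+ib}{2}\overline{H^{(1)}_\lambda}$ shows what is missing. Besides a multiple of $|H^{(1)}_\lambda|^2$, $|\mathbf{J}_\lambda|^2$ contains the oscillating term $\tfrac12\operatorname{Re}[(a-ib)\overline{(a+ib)}(H^{(1)}_\lambda)^2]$. So you must control the phase of $H^{(1)}_\lambda$ relative to $H^{(1)}_{\nu\pm1/2}$, not just its modulus.

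\emph{The Sturm-type fallback is not an argument yet.} At a first zero $r_0$ of $h$, the quantity $\operatorname{Re}(\overline{\mathbf{J}_\lambda}\mathbf{J}_\lambda')(r_0)$ is not determined by $h(r_0)=0$. So $h'(r_0)\le0$ contradicts nothing you have set up. Also, for real $(a,b)$ your large-$r$ analysis has equality at leading order, and the decisive $O(r^{-2})$ margin is left unchecked.

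\textbf{The missing idea is to eliminate $(a,b)$ before estimating anything.} Since $J$ and $Y$ satisfy the same linear relations, the Wronskian $X_{\nu+1/2,\nu-1/2}(r)=2/(\pi r)$ and Cramer's rule give
\[
\frac{2}{\pi r}\,\mathbf{J}_{\mu+\nu}=X_{\nu+1/2,\mu+\nu}\,\mathbf{J}_{\nu-1/2}+X_{\mu+\nu,\nu-1/2}\,\mathbf{J}_{\nu+1/2},
\]
where $X_{\alpha,\beta}=J_\alpha Y_\beta-J_\beta Y_\alpha$. The coefficients are real and independent of $(a,b)$. By Cauchy--Schwarz it suffices to show
\[
X_{\nu+1/2,\mu+\nu}^2+X_{\mu+\nu,\nu-1/2}^2<(2/(\pi r))^2 .
\]
Since $(a,b)$ can be chosen to make Cauchy--Schwarz sharp at any given $r$, this bound is actually equivalent to the theorem. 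These cross products are exactly the phase information your approach lacks.

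The paper then proceeds in two steps:
\begin{enumerate}
\item Watson's formula expresses $X_{\kappa+\lambda,\lambda}$ for $\kappa\in(0,1)$ as $\frac{4\sin\kappa\pi}{\pi^2}\int_0^\infty K_\kappa(2r\sinh s)e^{-(\kappa+2\lambda)s}\,ds$. Hence it is positive and decreasing in $\lambda$. Applying this with $\kappa=1/2\mp\mu$ reduces the claim to $\nu=0$.
\item At $\nu=0$ the left side equals $\frac{2}{\pi r}(J_\mu^2+Y_\mu^2)$. Only here does Nicholson's formula enter, as your convexity observation applied to orders $\mu$ and $1/2$, giving $J_\mu^2+Y_\mu^2<2/(\pi r)$.
\end{enumerate}
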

	To the best of our knowledge, this simple-looking inequality does not seem to have appeared in the literature.
	See Section \ref{section:proof} for the proof.
	We remark that neither \eqref{eq:Kra2014 bound} nor \eqref{eq:main inequality} is stronger than the other.
	For example, \eqref{eq:Kra2014 bound} and \eqref{eq:main inequality} imply that
	\begin{equation} \label{eq:upper bounds for nu=1}
		\frac{1}{2} \pi r \BesselJ{1}(r)^2 
		< \begin{dcases}
			\frac{r}{\abs{r^2 - 3/4}^{1/2}} , \\
			\frac{1}{2} \pi r (\BesselJ{1/2}(r)^2 + \BesselJ{3/2}(r)^2) =  1 - \frac{\sin{2r}}{r} + \frac{1 - \cos{2r}}{2 r^2}  ,
		\end{dcases}
	\end{equation} 
	respectively. Then it is easy to see that 
	\begin{equation}
		1 - \frac{\sin{2r}}{r} + \frac{1 - \cos{2r}}{2 r^2}  < \frac{r}{\abs{r^2 - 3/4}^{1/2}}
	\end{equation}
	holds when $r = \pi/4 + n \pi$, while the reverse inequality holds when $r = 3\pi/4 + n \pi$, where $n \in \Z_{\geq 0}$. 
	Figure \ref{fig:graph} shows the graphs of the functions appearing in \eqref{eq:upper bounds for nu=1}. 
	\begin{figure}[b]
		\centering
		\input{graph.tex}
		\caption{The graphs of the functions appearing in \eqref{eq:upper bounds for nu=1}.}
		\label{fig:graph}
	\end{figure}
	
	In addition, we establish a dimension-comparison result for the free Schr\"{o}dinger equation on $\R^d$ and $\R^{d+1}$ by using Theorem \ref{thm:main thm}. 
	Briefly, we have
	\begin{equation}
	\Constd{d+1} \leq 2 \Constd{d} 
	\end{equation}
	for every $d \geq 2$, where $\Constd{d}$ and $\Constd{d+1}$ denote the optimal constants of a certain inequality on $\R^d$ and $\R^{d+1}$, respectively.
	See Section \ref{section:application} for details.	
	\section{Proof of Theorem \ref{thm:main thm}} \label{section:proof}
	Our proof is based on three classical identities for the Bessel functions: the Wronskian identity, \citeauthor{Wat1944}'s formula \cite[Eq.~(5) in Section 13.73]{Wat1944}, and \citeauthor{Nic1910}'s formula \cite[Eqs.~(33), (34)]{Nic1910}.
	For each $(\mu, \nu) \in \R^2$, we define $\crossmunu{\mu}{\nu} \colon \intervaloo{0}{\infty} \to \R$ by
	\begin{equation} \label{eq:cross product}
		\crossmunu{\mu}{\nu}(r) \coloneqq \det \begin{pmatrix}
			\BesselJ{\mu} & \BesselJ{\nu} \\
			\BesselY{\mu} & \BesselY{\nu}
		\end{pmatrix}
		=
		\BesselJ{\mu}(r) \BesselY{\nu}(r) - \BesselJ{\nu}(r) \BesselY{\mu}(r) ,
	\end{equation}
	that is, a cross product of $\BesselJ{}$ and $\BesselY{}$.
	Then we have the following.
	\begin{proposition}[Wronskian identity] \label{prop:Wronskian}
		Let $\nu \in \R$ and $r \in \intervaloo{0}{\infty}$.
		Then we have
		\begin{equation} \label{eq:Wronskian}
			\crossmunu{\nu+1/2}{\nu-1/2}(r) = \frac{2}{\pi r} .
		\end{equation}
	\end{proposition}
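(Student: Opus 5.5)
This is the classical Wronskian $W[\BesselJ{\nu},\BesselY{\nu}](r) = \BesselJ{\nu}(r)\BesselY{\nu}'(r) - \BesselJ{\nu}'(r)\BesselY{\nu}(r) = 2/(\pi r)$, rewritten with the derivative eliminated by recurrences. I will take this Wronskian as the starting point. If one prefers to derive it, the argument is standard. Abel's identity applied to Bessel's equation $w'' + r^{-1}w' + (1-\nu^2/r^2)w = 0$ shows that $rW$ is constant. The constant is then read off from Hankel's asymptotics \citeDLMF{10.17.3}: as $r\to\infty$ we have $\BesselJ{\nu}\approx (2/\pi r)^{1/2}\cos\omega$ and $\BesselY{\nu}\approx(2/\pi r)^{1/2}\sin\omega$, with $\omega = r-\nu\pi/2-\pi/4$. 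This gives $rW\to 2/\pi$.

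Next I reparametrize by replacing $\nu$ with $\nu - 1/2$. I then use the recurrence $\mathcal{C}_{\lambda}'(r) = \mathcal{C}_{\lambda-1}(r) - (\lambda/r)\mathcal{C}_{\lambda}(r)$, valid for both $\mathcal{C}=\BesselJ{}$ and $\mathcal{C}=\BesselY{}$. Apply it with $\lambda = \nu+1/2$, so that $\lambda - 1 = \nu - 1/2$, and substitute into $W[\BesselJ{\nu+1/2},\BesselY{\nu+1/2}]$.

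In the substituted expression the terms $-(\lambda/r)\BesselJ{\lambda}\BesselY{\lambda}$ cancel. What remains is
\begin{equation}
\BesselJ{\nu+1/2}\BesselY{\nu-1/2} - \BesselJ{\nu-1/2}\BesselY{\nu+1/2} = \crossmunu{\nu+1/2}{\nu-1/2}(r).
\end{equation}
Combined with the Wronskian, this is exactly \eqref{eq:Wronskian}.

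The main thing to get right is the sign and ordering conventions: the order of the entries in the determinant defining $\crossmunu{\mu}{\nu}$, and the sign of the Wronskian, which is $+2/(\pi r)$ for $\BesselJ{}\BesselY{}' - \BesselJ{}'\BesselY{}$. As a sanity check I will use $\nu = 0$. Here $\BesselJ{\pm 1/2}$ are given by \eqref{eq:BesselJ for nu = pm 1/2}, and $\BesselY{1/2} = -\BesselJ{-1/2}$, $\BesselY{-1/2} = \BesselJ{1/2}$. Then
\begin{equation}
\crossmunu{1/2}{-1/2} = \BesselJ{1/2}^2 + \BesselJ{-1/2}^2 = \frac{2}{\pi r}(\sin^2 r + \cos^2 r) = \frac{2}{\pi r},
\end{equation}
which confirms the sign. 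Since the identity is analytic in $\nu$, no case distinction for integer orders is needed. The recurrences and the Wronskian hold for all real orders, with $\BesselY{}$ defined by its limiting formula at integer orders.
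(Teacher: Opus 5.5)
Your proof is correct. Substituting $\mathcal{C}_\lambda' = \mathcal{C}_{\lambda-1} - (\lambda/r)\mathcal{C}_\lambda$ with $\lambda = \nu+1/2$ into $\BesselJ{\lambda}\BesselY{\lambda}' - \BesselJ{\lambda}'\BesselY{\lambda} = 2/(\pi r)$ gives exactly $\crossmunu{\nu+1/2}{\nu-1/2}(r)$, and your $\nu = 0$ check confirms the sign. The paper gives no proof of its own: it quotes this as a classical identity, namely $\BesselJ{\nu+1}\BesselY{\nu} - \BesselJ{\nu}\BesselY{\nu+1} = 2/(\pi r)$ with $\nu$ shifted by $1/2$, and your derivation is the standard textbook route to it.
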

	\begin{proposition}[\citeauthor{Wat1944}'s formula%
		\footnote{Some references (e.g.~\citet[6.617.1]{GR2014}) present \citeauthor{Wat1944}'s formula \eqref{eq:Watson} with the wrong sign in the exponential.}%
] \label{prop:Watson}
		Let $\mu \in \intervaloo{-1}{1}$, $\nu \in \R$, and $r \in \intervaloo{0}{\infty}$.
		Then we have
		\begin{equation} \label{eq:Watson}
			\crossmunu{\mu+\nu}{\nu}(r) = \frac{4 \sin( \mu \pi )}{\pi^2} \int_{s \in \intervaloo{0}{\infty}} \BesselK{\mu}(2 r \sinh{s}) \exp( - (\mu + 2\nu) s ) \, ds , 
		\end{equation}
		where $\BesselK{\mu}$ is the modified Bessel function of the second kind of order $\mu$.
	\end{proposition}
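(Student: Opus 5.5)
The plan is to reduce Proposition~\ref{prop:Watson} to an identity between Hankel functions and then evaluate a double contour integral. Write $H^{(1)}_\alpha = \BesselJ{\alpha} + i\BesselY{\alpha}$ and $H^{(2)}_\alpha = \BesselJ{\alpha} - i\BesselY{\alpha}$. Expanding the product gives
\begin{equation}
H^{(1)}_\alpha H^{(2)}_\beta - H^{(2)}_\alpha H^{(1)}_\beta = -2i\,\crossmunu{\alpha}{\beta} .
\end{equation}
Hence it suffices to show that, for $\alpha = \mu+\nu$ and $\beta = \nu$, the combination $\tfrac{i}{2}\bigl(H^{(1)}_{\mu+\nu} H^{(2)}_{\nu} - H^{(2)}_{\mu+\nu} H^{(1)}_{\nu}\bigr)(r)$ equals the right-hand side of \eqref{eq:Watson}.

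First I will reduce to a convenient range of parameters by analytic continuation. Fix $r>0$ and $\mu \in \intervaloo{-1}{1}$. The left-hand side $\crossmunu{\mu+\nu}{\nu}(r)$ is real-analytic (indeed entire) in $\nu$. The integral on the right converges absolutely and locally uniformly in $\nu \in \C$, for two reasons:
\begin{itemize}
\item as $s\to\infty$, $\BesselK{\mu}(2r\sinh s)$ decays like $\exp(-re^{s})$, which dominates any factor $e^{-(\mu+2\nu)s}$;
\item as $s\to 0^+$, $\BesselK{\mu}(2r\sinh s) = O(s^{-\abs{\mu}})$ (with a logarithm when $\mu=0$), which is integrable precisely because $\abs{\mu}<1$.
\end{itemize}
So the right side is entire in $\nu$ as well. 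It is therefore enough to prove \eqref{eq:Watson} for $\nu$ in some open set, e.g.\ a range where all the Schläfli integrals below converge absolutely and Fubini applies.

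For the main computation I use the Sommerfeld--Schläfli representations
\begin{equation}
H^{(1)}_\alpha(r) = \frac{1}{\pi i}\int_{-\infty}^{\infty+\pi i} e^{r\sinh w - \alpha w}\,dw, \qquad H^{(2)}_\alpha(r) = -\frac{1}{\pi i}\int_{-\infty}^{\infty-\pi i} e^{r\sinh w - \alpha w}\,dw .
\end{equation}
I multiply the representations for the two orders and substitute $w_1 = t+s$, $w_2 = t-s$ (up to the imaginary shifts of the contours). Then
\begin{equation}
\sinh w_1 + \sinh w_2 = 2\sinh t\cosh s \quad\text{or}\quad 2\cosh t \sinh s ,
\end{equation}
depending on which variable carries the shift by $\pm\pi i$. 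After the shift, the exponent becomes $-2r\sinh s\cosh u$ for a real variable $u$, with weight $e^{-\mu u}e^{-(\mu+2\nu)s}$. The $u$-integral is then recognised via
\begin{equation}
\BesselK{\mu}(z) = \tfrac12\int_{-\infty}^{\infty} e^{-z\cosh u - \mu u}\,du .
\end{equation}
The two terms $H^{(1)}H^{(2)}$ and $H^{(2)}H^{(1)}$ produce the same real integral multiplied by phases $e^{\pm i\mu\pi}$. Their difference yields the factor $\sin(\mu\pi)$ and, after collecting the constants $(\pi i)^{-2}$ and $\tfrac{i}{2}$, the prefactor $4/\pi^2$. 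A useful sanity check on signs is $\mu = 1$ formally, or the case $\nu=-1/2$, $\mu=1$ compared with Proposition~\ref{prop:Wronskian}: using $\BesselK{1}(z)\sim 1/z$, the limit $\mu\to 1^-$ should give $2/(\pi r)$. This check is exactly what the footnote about the sign of the exponential warns about.

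The main obstacle is the contour bookkeeping in the middle step. One has to move the contours so that, after the change of variables, each piece runs along lines where the integrand decays, justify Fubini there, and keep track of which pieces cancel. Only the pieces carrying the phases $e^{\pm i\mu\pi}$ should survive. I would first try to arrange everything so that one variable $s$ runs over $\intervaloo{0}{\infty}$ and the other over $\R$, exploiting the symmetry $s\mapsto -s$ to fold the integral onto $s>0$. If the contour approach becomes unmanageable, the fallback is to verify the identity via Mellin transforms in $r$. Both sides have computable Mellin transforms: the left through the known Mellin transforms of $\BesselJ{\alpha}\BesselY{\beta}$, and the right through $\int_0^\infty r^{z-1}\BesselK{\mu}(2r\sinh s)\,dr$ followed by a Beta-type integral in $s$. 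The identity would then follow from uniqueness of the Mellin transform.
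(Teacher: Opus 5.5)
The paper gives no proof of this proposition; it quotes Watson's treatise (Section 13.73, Eq.~(5)). A complete derivation along your lines would therefore be a self-contained alternative. Your outline is sound as far as it goes. The identity $\crossmunu{\alpha}{\beta}=\tfrac{i}{2}\bigl(H^{(1)}_\alpha H^{(2)}_\beta-H^{(2)}_\alpha H^{(1)}_\beta\bigr)$ is correct, the continuation in $\nu$ is legitimate, and the phases and constants you anticipate are right. Concretely, for real $\mu\in\intervaloo{-1}{1}$ and real $\nu$, your middle step must produce
\[
H^{(1)}_{\mu+\nu}(r)\,H^{(2)}_{\nu}(r)=\frac{4}{\pi^2}\int_0^\infty \BesselK{\mu}(2r\sinh s)\bigl(e^{-i\mu\pi}e^{-(\mu+2\nu)s}+e^{(\mu+2\nu)s}\bigr)\,ds ,
\]
with $H^{(2)}_{\mu+\nu}H^{(1)}_{\nu}$ its complex conjugate. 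The phase-free terms then cancel, and $\tfrac{i}{2}\cdot\tfrac{4}{\pi^2}\bigl(e^{-i\mu\pi}-e^{i\mu\pi}\bigr)=\tfrac{4\sin(\mu\pi)}{\pi^2}$.

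The gap is that this step, which is the entire content of the proof, is only asserted. With Schl\"afli contours it is not a matter of ``substituting and shifting''. Under $w_1=t+s$, $w_2=t-s$ the product of the two broken contours is not a product of contours in $(t,s)$. You would have to deform a $2$-chain in $\C^2$ onto $\{t<0,\ s\in\R\}\cup\{t>0,\ s\in\R+\pi i\}$, and the seam $t=0$ is exactly where $\abs{\mu}<1$ must be used.

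A cleaner route is as follows. For $z=r+i\delta$ with $\delta>0$, use the absolutely convergent representations $H^{(1)}_\alpha(z)=\frac{e^{-\alpha\pi i/2}}{\pi i}\int_\R e^{iz\cosh u-\alpha u}\,du$ and $H^{(2)}_\gamma(\bar z)=-\frac{e^{\gamma\pi i/2}}{\pi i}\int_\R e^{-i\bar z\cosh u-\gamma u}\,du$.
\begin{itemize}
\item Now $u_1=t+s$, $u_2=t-s$ is an honest linear change of variables on $\R^2$, and the exponent becomes $2ir\sinh t\sinh s-2\delta\cosh t\cosh s-(\alpha+\gamma)t-(\alpha-\gamma)s$.
\item The $t$-integral equals $2e^{-i(\alpha+\gamma)\theta}\BesselK{\alpha+\gamma}(R)$, where $R=2(\delta^2\cosh^2 s+r^2\sinh^2 s)^{1/2}$ and $\tan\theta=(r/\delta)\tanh s$.
\item Let $\delta\downarrow0$ by dominated convergence; for real orders $\BesselK{\alpha+\gamma}$ is decreasing, and $\abs{\alpha+\gamma}<1$ gives integrability at $s=0$.
\item Take $\alpha=\mu+\nu$, $\gamma=-\nu$, and use $H^{(2)}_{-\nu}=e^{-i\nu\pi}H^{(2)}_{\nu}$.
\end{itemize}
This yields the display above for every real $\nu$ directly, so no continuation in $\nu$ is needed.

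Finally, your proposed sign check cannot detect the problem the footnote warns about. The limit $\mu\to1^-$ is governed by the singularity of $\BesselK{\mu}(2r\sinh s)$ at $s=0$, where $e^{\pm(\mu+2\nu)s}\to1$. So with either sign it returns $\crossmunu{\nu+1}{\nu}(r)=2/(\pi r)$, for every $\nu$.

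A check that does see the sign is $r\to0$. For $0<\mu<1$ and $\nu>0$, the left side behaves like $\frac{\Gamma(\mu+\nu)}{\pi\Gamma(\nu+1)}(2/r)^{\mu}$. The right side reproduces this through $\int_0^\infty(2\sinh s)^{-\mu}e^{-(\mu+2\nu)s}\,ds=\frac{\Gamma(\mu+\nu)\Gamma(1-\mu)}{2\Gamma(\nu+1)}$ only with the minus sign. With the plus sign it grows like $r^{-\mu-2\nu}$.
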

	\begin{proposition}[\citeauthor{Nic1910}'s formula]
		Let $\nu \in \R$ and $r \in \intervaloo{0}{\infty}$.
		Then we have
		\begin{equation} \label{eq:Nicholson}
			\BesselJ{\nu}(r)^2 + \BesselY{\nu}(r)^2 = \frac{8}{\pi^2} \int_{s \in \intervaloo{0}{\infty}} \BesselK{0}(2 r \sinh{s}) \cosh( 2 \nu s ) \, ds .
		\end{equation}
	\end{proposition}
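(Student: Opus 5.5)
The plan is the classical ODE-and-asymptotics argument. Write $F_\nu(r)$ for the right-hand side of \eqref{eq:Nicholson}. We show that $F_\nu$ lies in the three-dimensional solution space of the linear third-order ODE satisfied by products of cylinder functions of order $\nu$. Then we identify it inside that space through its behaviour as $r\to\infty$.

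\emph{Convergence.} First check that $F_\nu$ is well defined and smooth for every real $\nu$.
Near $s=0$ the integrand has only the integrable singularity $\BesselK{0}(u)\sim -\log u$. For large $s$ we have $\BesselK{0}(2r\sinh s)\lesssim \exp(-re^{s}/2)$, which kills $\cosh(2\nu s)$ for any $\nu$ and any $r>0$. The same bounds persist after differentiating in $r$ up to three times, locally uniformly in $r$, so differentiation under the integral sign is legitimate.

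\emph{The ODE.} If $w_1,w_2$ solve Bessel's equation of order $\nu$, then $y=w_1w_2$ satisfies
\begin{equation}
r^2 y''' + 3r y'' + (4r^2+1-4\nu^2)y' + 4r y = 0 .
\end{equation}
Its solution space is spanned by $\BesselJ{\nu}^2$, $\BesselJ{\nu}\BesselY{\nu}$ and $\BesselY{\nu}^2$.

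We insert $F_\nu$ into this operator, differentiate under the integral, and use the modified Bessel equation $u^2\BesselK{0}''+u\BesselK{0}'-u^2\BesselK{0}=0$ with $u=2r\sinh s$. The goal is to rewrite the integrand as an exact $s$-derivative. The factor $4\nu^2$ should come from $\partial_s^2\cosh(2\nu s)=4\nu^2\cosh(2\nu s)$ after two integrations by parts in $s$. The remaining terms should be absorbed using $\partial_s = 2r\cosh s\,\partial_u$ together with the identity $\cosh^2 s=1+\sinh^2 s$.

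The boundary terms vanish at $s=\infty$ by the exponential decay. At $s=0$ they must be checked carefully because of the logarithmic singularity of $\BesselK{0}$. They should vanish since $\partial_s\cosh(2\nu s)=0$ at $s=0$ and $u\BesselK{0}'(u)\to -1$ is bounded there. This bookkeeping is the step I expect to be the main obstacle. If the direct computation becomes unwieldy, an alternative route is the Dixon--Ferrar product formula $\BesselK{\nu}(x)^2=2\int_0^\infty \BesselK{0}(2x\cosh t)\cosh(2\nu t)\,dt$, continued analytically to $x=\mp ir$ and combined with the relations between $\BesselK{\nu}$ and the Hankel functions. However, that requires controlling the contour rotation, so I would try the ODE route first.

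\emph{Identification.} With the ODE established, we write $F_\nu=\alpha\BesselJ{\nu}^2+\beta\BesselJ{\nu}\BesselY{\nu}+\gamma\BesselY{\nu}^2$. By Hankel's expansion, as $r\to\infty$,
\begin{equation}
\BesselJ{\nu}^2+\BesselY{\nu}^2=\frac{2}{\pi r}+O(r^{-2}),
\end{equation}
while $\BesselJ{\nu}^2-\BesselY{\nu}^2$ and $2\BesselJ{\nu}\BesselY{\nu}$ equal $\frac{2}{\pi r}$ times $\sin(2r-\nu\pi)$ and $-\cos(2r-\nu\pi)$ respectively, up to $O(r^{-2})$.

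For $F_\nu$ we substitute $u=2r\sinh s$, so that $ds=du/\sqrt{4r^2+u^2}$. Dominated convergence then gives
\begin{equation}
rF_\nu(r)\to \frac{8}{\pi^2}\cdot\frac12\int_0^\infty \BesselK{0}(u)\,du=\frac{8}{\pi^2}\cdot\frac{\pi}{4}=\frac{2}{\pi}.
\end{equation}
In particular, $rF_\nu(r)$ has a limit and does not oscillate.

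Now compare $rF_\nu(r)$ with the asymptotics of the basis. Its limit equals $\frac{2}{\pi}\cdot\frac{\alpha+\gamma}{2}$ plus an oscillating combination of $\sin(2r-\nu\pi)$ and $\cos(2r-\nu\pi)$ with coefficients proportional to $\frac{\alpha-\gamma}{2}$ and $\beta$. Since the limit exists, the oscillating amplitudes must vanish, which forces $\alpha=\gamma$ and $\beta=0$. Matching the constant $\frac{2}{\pi}$ then gives $\alpha=\gamma=1$, which is \eqref{eq:Nicholson}.
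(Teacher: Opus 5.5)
Your proof is correct, and it differs from the paper, which does not prove this proposition: it quotes the identity as classical from Nicholson (1910) and Watson (\S13.73). There it comes out of complex-variable integral representations, which is the circle of ideas of your Dixon--Ferrar alternative. The same framework also gives the cross-product formula \eqref{eq:Watson}.

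The step you flagged as the main obstacle does close.
\begin{itemize}
\item With $\theta = r\,\frac{d}{dr}$, your ODE reads $\theta^3 y + 4r^2(\theta+1)y - 4\nu^2\theta y = 0$. Moreover, $\theta$ acts on $\BesselK{0}(2r\sinh s)$ as $u\frac{d}{du}$.
\item Use $(u\BesselK{0}'(u))' = u\BesselK{0}(u)$ and $4r^2\cosh^2 s = 4r^2+u^2$. This gives $\partial_s^2\bigl(u\BesselK{0}'(u)\bigr) = 2u^2\BesselK{0} + u^3\BesselK{0}' + 4r^2(\BesselK{0} + u\BesselK{0}')$.
\item That expression is exactly the integrand of $\theta^3F_\nu + 4r^2(\theta+1)F_\nu$, apart from the factor $\cosh(2\nu s)$. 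So the two integrations by parts cancel everything.
\item There are two boundary terms at $s=0$. The first vanishes for the reason you give. The second is $2r\cosh s\, u\BesselK{0}(u)\cosh(2\nu s)$, and it vanishes because $u\BesselK{0}(u)\to 0$, not because $u\BesselK{0}'(u)\to -1$.
\end{itemize}
The identification step is sound as written. For dominated convergence you can use $\cosh(2\nu s)\leq (1+u)^{2\abs{\nu}}$ for $r\geq 1$.

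Your route gives a self-contained, purely real-variable proof with no contour rotation. It needs only four ingredients: the modified Bessel equation for $\BesselK{0}$, its behaviour at $0$, the value $\int_0^\infty \BesselK{0} = \pi/2$, and Hankel's expansion. The classical route gives more. It yields the identity for complex arguments with positive real part, and it also produces the off-diagonal formula \eqref{eq:Watson} used in Corollary~\ref{cor:Watson}. Your third-order ODE is specific to products of two cylinder functions of the same order, so it does not directly reach that formula.
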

	Since $\BesselK{\nu}$ is positive on $\intervaloo{0}{\infty}$ for every $\nu \in \R$, 
	we see that the identities \eqref{eq:Watson} and \eqref{eq:Nicholson} imply the following, respectively.
	\begin{corollary} \label{cor:Watson}
		Let $\mu \in \intervaloo{0}{1}$ and $r \in \intervaloo{0}{\infty}$. Then the function
		\begin{equation}
			\nu \longmapsto \crossmunu{\mu+\nu}{\nu}(r)
		\end{equation}
		is positive and strictly decreasing on $\R$.
	\end{corollary}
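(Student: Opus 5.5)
We read both claims of Corollary~\ref{cor:Watson} off Watson's formula \eqref{eq:Watson}. Fix $\mu \in \intervaloo{0}{1}$ and $r \in \intervaloo{0}{\infty}$. Since $\mu \in \intervaloo{-1}{1}$, Proposition~\ref{prop:Watson} applies for every $\nu \in \R$ and gives
\begin{equation}
	\crossmunu{\mu+\nu}{\nu}(r) = \frac{4 \sin( \mu \pi )}{\pi^2} \int_{0}^{\infty} \BesselK{\mu}(2 r \sinh{s}) \exp( - (\mu + 2\nu) s ) \, ds .
\end{equation}
For $\mu \in \intervaloo{0}{1}$ the prefactor $4\sin(\mu\pi)/\pi^2$ is strictly positive.

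\emph{Positivity.} On $s \in \intervaloo{0}{\infty}$ we have $2r\sinh s > 0$, so $\BesselK{\mu}(2r\sinh s) > 0$. The exponential factor is also positive, so the integrand is strictly positive on $\intervaloo{0}{\infty}$. The integral is finite because the formula holds as an identity with a finite left-hand side. Hence it is a strictly positive number, and so is $\crossmunu{\mu+\nu}{\nu}(r)$.

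\emph{Strict decrease.} Take $\nu_1 < \nu_2$. For every $s > 0$ we have
\begin{equation}
	\exp(-(\mu+2\nu_2)s) = \exp(-(\mu+2\nu_1)s)\, e^{-2(\nu_2-\nu_1)s} < \exp(-(\mu+2\nu_1)s).
\end{equation}
Multiply by the positive factor $\BesselK{\mu}(2r\sinh s)$. The integrand for $\nu_2$ is then pointwise strictly smaller than the integrand for $\nu_1$ on $\intervaloo{0}{\infty}$. Both integrals converge by Proposition~\ref{prop:Watson}. Subtracting them gives the integral of a strictly positive function over a set of positive measure, which is strictly positive. So the integral at $\nu_2$ is strictly smaller than the integral at $\nu_1$. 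Multiplying by the positive prefactor yields $\crossmunu{\mu+\nu_2}{\nu_2}(r) < \crossmunu{\mu+\nu_1}{\nu_1}(r)$.

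The only point needing care is convergence: we need the integral to be finite for every $\nu \in \R$ in order to compare integrals. This is part of the content of Proposition~\ref{prop:Watson}, which we assume. It can also be checked directly. Near $s = 0$, $\BesselK{\mu}(x) = O(x^{-\mu})$ is integrable since $\mu < 1$. As $s \to \infty$, $\BesselK{\mu}(2r\sinh s)$ decays doubly exponentially and dominates any $e^{cs}$.
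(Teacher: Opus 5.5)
Your proof is correct and follows the paper's own route: positivity of $\BesselK{\mu}$ on $\intervaloo{0}{\infty}$ and of $\sin(\mu\pi)$ for $\mu \in \intervaloo{0}{1}$ in Watson's formula give positivity. Strict monotonicity of $\exp(-(\mu+2\nu)s)$ in $\nu$ for $s>0$ gives strict decrease; your explicit convergence check fills in a detail the paper leaves implicit.
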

	\begin{corollary} \label{cor:Nicholson}
		Let $r \in \intervaloo{0}{\infty}$.
		Then the function
		\begin{equation}
			\nu \longmapsto \BesselJ{\nu}(r)^2 + \BesselY{\nu}(r)^2 
		\end{equation}
		is even on $\R$ and strictly increasing on $\intervalco{0}{\infty}$.
	\end{corollary}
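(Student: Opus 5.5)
The plan is to read both claims directly off Nicholson's formula \eqref{eq:Nicholson}, using only the positivity of $\BesselK{0}$ and elementary properties of $\cosh$. Fix $r \in \intervaloo{0}{\infty}$ and write
\begin{equation}
	F(\nu) \coloneqq \BesselJ{\nu}(r)^2 + \BesselY{\nu}(r)^2 = \frac{8}{\pi^2} \int_{s \in \intervaloo{0}{\infty}} \BesselK{0}(2 r \sinh{s}) \cosh( 2 \nu s ) \, ds .
\end{equation}
Since \eqref{eq:Nicholson} holds for every $\nu \in \R$ and its left-hand side is finite, the integral on the right converges, absolutely because the integrand is positive, for every $\nu$. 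We need this so that differences of such integrals are legitimate.

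\emph{Evenness.} The integrand depends on $\nu$ only through $\cosh(2\nu s)$, and $\cosh$ is even. Hence the right-hand side of \eqref{eq:Nicholson} is unchanged under $\nu \mapsto -\nu$, so $F(-\nu) = F(\nu)$.

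\emph{Strict monotonicity.} Take $0 \leq \nu_1 < \nu_2$. For every $s > 0$ we have $0 \leq 2\nu_1 s < 2\nu_2 s$, and $\cosh$ is strictly increasing on $\intervalco{0}{\infty}$, so $\cosh(2\nu_2 s) - \cosh(2\nu_1 s) > 0$. Also $2r\sinh s > 0$, so $\BesselK{0}(2r\sinh s) > 0$ by the positivity of $\BesselK{0}$ on $\intervaloo{0}{\infty}$ noted above. If one wants to see this directly, it follows from the integral representation $\BesselK{0}(x) = \int_0^\infty e^{-x\cosh t}\,dt$. Therefore
\begin{equation}
	F(\nu_2) - F(\nu_1) = \frac{8}{\pi^2} \int_{s \in \intervaloo{0}{\infty}} \BesselK{0}(2 r \sinh{s}) \bigl( \cosh(2\nu_2 s) - \cosh(2\nu_1 s) \bigr) \, ds .
\end{equation}
Splitting the integral is justified because both integrals converge. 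The integrand here is continuous and strictly positive on $\intervaloo{0}{\infty}$, so the integral is strictly positive, and $F(\nu_1) < F(\nu_2)$.

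There is no real obstacle in this argument. The only point that needs care is convergence of the integrals, which is what makes the subtraction valid, and it comes for free from the finiteness of the left-hand side of \eqref{eq:Nicholson}. It also matches the behaviour of the integrand near the endpoints: near $s=0$, $\BesselK{0}(2r\sinh s)$ has only a logarithmic singularity, and as $s\to\infty$, $\BesselK{0}(2r\sinh s)$ decays doubly exponentially, which beats $\cosh(2\nu s)$ for any $\nu$.
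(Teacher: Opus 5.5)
Your proposal is correct and follows the paper's route. The paper also reads both properties directly off Nicholson's formula, using that $\cosh$ is even and strictly increasing on $[0,\infty)$ together with the positivity of $K_0$; your remarks on convergence of the integrals just make explicit what the paper leaves implicit.
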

	Now we prove Theorem \ref{thm:main thm} by using Proposition \ref{prop:Wronskian} and Corollaries \ref{cor:Watson}, \ref{cor:Nicholson}.  
	\begin{proof}[Proof of Theorem \ref{thm:main thm}]
		Using the Wronskian identity \eqref{eq:Wronskian} and Cramer's rule, we obtain
		\begin{equation}
			\frac{2}{\pi r}
			\begin{pmatrix}
				\BesselJ{\mu + \nu}(r) \\
				\BesselY{\mu + \nu}(r)
			\end{pmatrix}
			= \crossmunu{\nu+1/2}{\mu + \nu}(r)
			\begin{pmatrix}
				\BesselJ{\nu-1/2}(r) \\
				\BesselY{\nu-1/2}(r)
			\end{pmatrix}
			+ \crossmunu{\mu + \nu}{\nu-1/2}(r)
			\begin{pmatrix}
				\BesselJ{\nu+1/2}(r) \\
				\BesselY{\nu+1/2}(r)
			\end{pmatrix} ,
		\end{equation} 
		so that
		\begin{equation}
			\frac{2}{\pi r} \BesselJY{\mu+\nu}(r) = \crossmunu{\nu+1/2}{\mu + \nu}(r) \BesselJY{\nu-1/2}(r) + \crossmunu{\mu + \nu}{\nu-1/2}(r) \BesselJY{\nu+1/2}(r) .
		\end{equation}
		Thus, applying the Cauchy--Schwarz inequality, we get
		\begin{equation}
		\mleft(\frac{2}{\pi r}\mright)^2 \abs{\BesselJY{\mu+\nu}(r)}^2 \leq (\crossmunu{\nu+1/2}{\mu + \nu}(r)^2 + \crossmunu{\mu + \nu}{\nu-1/2}(r)^2) ( \abs{\BesselJY{\nu-1/2}(r)}^2 + \abs{\BesselJY{\nu+1/2}(r)}^2 ) .
		\end{equation}
			We also note that $\crossmunu{\nu+1/2}{\nu-1/2}(r) \ne 0$ (which follows from the Wronskian identity \eqref{eq:Wronskian}) and the assumption $(a, b) \ne (0, 0)$ imply that 
		\begin{equation}
			\abs{\BesselJY{\nu-1/2}(r)}^2 + \abs{\BesselJY{\nu+1/2}(r)}^2 > 0 .
		\end{equation}
		Therefore, it suffices to show that 
		\begin{equation}
			\crossmunu{\nu+1/2}{\mu + \nu}(r)^2 + \crossmunu{\mu + \nu}{\nu-1/2}(r)^2 < \mleft(\frac{2}{\pi r}\mright)^2
		\end{equation}
		holds.
		Moreover, Corollary \ref{cor:Watson} gives us
		\begin{align}
			0 < \crossmunu{(1/2-\mu) + \mu + \nu}{\mu + \nu}(r) &\leq \crossmunu{(1/2-\mu) + \mu}{\mu}(r) , \\
			0 < \crossmunu{\mu + \nu}{\nu-1/2}(r) &\leq \crossmunu{\mu}{-1/2}(r) , 
		\end{align}
		so that it is reduced to
		\begin{equation} \label{eq:sufficient for L=1}
			\crossmunu{1/2}{\mu}(r)^2 + \crossmunu{\mu}{-1/2}(r)^2 < \mleft(\frac{2}{\pi r}\mright)^2 .
		\end{equation}
		Now observe that the left-hand side can be simplified as
		\begin{equation}
			\crossmunu{1/2}{\mu}(r)^2 + \crossmunu{\mu}{-1/2}(r)^2 = \frac{2}{\pi r} \mleft( \BesselJ{\mu}(r)^2 + \BesselY{\mu}(r)^2 \mright) 
		\end{equation}
		by substituting
		\begin{equation}
			\BesselJ{-1/2}(r) = - \BesselY{1/2}(r) = \mleft( \frac{2}{\pi r} \mright)^{1/2} \cos{r} , \quad \BesselJ{1/2}(r) = \BesselY{-1/2}(r) = \mleft( \frac{2}{\pi r} \mright)^{1/2} \sin{r} .
		\end{equation}
		Then we get
		\begin{equation}
			\BesselJ{\mu}(r)^2 + \BesselY{\mu}(r)^2 < \BesselJ{1/2}(r)^2 + \BesselY{1/2}(r)^2 = \frac{2}{\pi r} 
		\end{equation}
		by Corollary \ref{cor:Nicholson}, since $\mu \in \intervaloo{-1/2}{1/2}$.
		This completes the proof.
	\end{proof}
	\section{Application to optimal constants of smoothing estimates} \label{section:application}
	Let $\Hamiltonianfree \coloneqq - \Laplacian$ be the Schr\"{o}dinger operator of a free particle on $\R^d$. Then it is well known that the so-called smoothing estimate
\begin{equation} \label{eq:smoothing free}
	\int_{(x, t) \in \R^d \times \R} w(\abs{x}) \abs{ \psi(\Hamiltonianfree) e^{- i t \Hamiltonianfree} u_0(x) }^2 \, dx \, dt \leq C \norm{u_0}_{L^2(\R^d)}^{2} 
\end{equation}
holds for a suitable pair of functions $w, \psi \colon \intervaloo{0}{\infty} \to \intervalco{0}{\infty}$, for example,
\begin{alignat}{8}
	&d \geq 3,  \quad && \quad && ( w(r), \psi(s) ) = ( &&(1+r^2)^{-1}, \quad&&(1+s)^{1/4} &&) ,
	\label{eq:type A} 
\end{alignat}
which is due to \citet[Theorem 2]{KY1989}. The inequality \eqref{eq:smoothing free} is usually referred to as a smoothing estimate.
Hereinafter, we always assume that $w$ and $\psi$ are non-negative Borel measurable functions defined on $\intervaloo{0}{\infty}$.
Now let $\Constwpd{w}{\psi}{d} \in \intervalcc{0}{\infty}$ be the optimal constant of the inequality \eqref{eq:smoothing free}, 
that is,
\begin{equation}
	\Constwpd{w}{\psi}{d} \coloneqq \sup_{u_0 \in L^2(\R^d) \setminus \{0\}} \frac{1}{\norm{u_0}_{L^2(\R^d)}^{2}} \int_{(x, t) \in \R^d \times \R} w(\abs{x}) \abs{ \psi(\Hamiltonianfree) e^{- i t \Hamiltonianfree} u_0(x) }^2 \, dx \, dt .
\end{equation}
\citet[Theorem 4.1]{Wal2002} established the following formula for the optimal constant. 
\begin{theorem}[{\cite[Theorem 4.1]{Wal2002}}] \label{thm:Walther}
	Let $w \colon \intervaloo{0}{\infty} \to \intervalco{0}{\infty}$.
	For each $\nu \in \R$, we define $\Tnu{\nu} w \colon \intervaloo{0}{\infty} \to \intervalcc{0}{\infty}$ by
	\begin{equation} \label{eq:Tnu}
		\Tnu{\nu}w(s) \coloneqq \pi \int_{r \in \intervaloo{0}{\infty}} r s \BesselJ{\nu}(rs)^2 w(r) \, dr .
	\end{equation}
	Then, for every $d \geq 2$ and $\psi \colon \intervaloo{0}{\infty} \to \intervalco{0}{\infty}$, we have
	\begin{equation} \label{eq:Walther}
		\Constwpd{w}{\psi}{d} = \sup_{k \in \Z_{\geq 0}} \esssup_{s \in \intervaloo{0}{\infty}} s^{-1} \psi(s^2)^2 \Tnu{k+d/2-1}w(s) .
	\end{equation}
\end{theorem}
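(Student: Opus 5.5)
The plan is to diagonalize the operator $u_0 \mapsto w(\abs{x})^{1/2}\psi(\Hamiltonianfree)e^{-it\Hamiltonianfree}u_0$. We use the Fourier transform in $x$, Plancherel in $t$, and a spherical harmonic decomposition. After these steps the quadratic form on the left of \eqref{eq:smoothing free} becomes a direct sum of one-dimensional multiplication operators. The multiplier in the degree-$k$ sector should be exactly $s^{-1}\psi(s^2)^2\Tnu{k+d/2-1}w(s)$. The optimal constant is then the supremum of this multiplier, in $k$ and essentially in $s$, which is \eqref{eq:Walther}.

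\textbf{Step 1: reduction to radial profiles.} Work first on the dense class of $u_0$ such that $\widehat{u_0}$ is smooth, compactly supported in $\R^d\setminus\{0\}$, and a finite sum of spherical harmonics:
\begin{equation}
\widehat{u_0}(s\omega)=\sum_{k,j} a_{kj}(s)Y_{kj}(\omega).
\end{equation}
Here $\{Y_{kj}\}_j$ is an orthonormal basis of degree-$k$ harmonics on $S^{d-1}$. Then $\norm{u_0}_{L^2}^2=(2\pi)^{-d}\sum_{k,j}\int_0^\infty \abs{a_{kj}(s)}^2 s^{d-1}\,ds$. By the Funk--Hecke (Bochner) formula, the inverse Fourier transform of $a(s)Y_{kj}(\omega)$ is
\begin{equation}
c_{d,k}\,Y_{kj}(x/\abs{x})\,\abs{x}^{1-d/2}\int_0^\infty a(s)\BesselJ{k+d/2-1}(s\abs{x})\,s^{d/2}\,ds ,
\end{equation}
with $\abs{c_{d,k}}$ an explicit power of $2\pi$ independent of $k$. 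Inserting $\psi(s^2)e^{-its^2}$ into the $s$-integral gives $\psi(\Hamiltonianfree)e^{-it\Hamiltonianfree}u_0(x)$.

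\textbf{Step 2: the time integral.} For fixed $x$, each coefficient is of the form $\int_0^\infty e^{-its^2}h(s)\,ds$. Substituting $\rho=s^2$ and applying Plancherel in $t$ gives
\begin{equation}
\int_\R\Bigl\lvert\int_0^\infty e^{-its^2}h(s)\,ds\Bigr\rvert^2 dt=\pi\int_0^\infty \abs{h(s)}^2 s^{-1}\,ds .
\end{equation}
Next integrate over $x=r\theta$. Orthogonality of the $Y_{kj}(\theta)$ in $L^2(S^{d-1})$ removes all cross terms, and Tonelli applies because everything is nonnegative. The left side of \eqref{eq:smoothing free} becomes
\begin{equation}
(2\pi)^{-d}\sum_{k,j}\int_0^\infty \abs{a_{kj}(s)}^2\, s^{-1}\psi(s^2)^2\,\Tnu{k+d/2-1}w(s)\,s^{d-1}\,ds .
\end{equation}
Tracking the constants ($\pi$ from Plancherel, the $r^{d-1}\cdot r^{2-d}=r$ weight, and $s^{d}=s\cdot s^{d-1}$) should yield precisely the factor $\pi\int_0^\infty rs\BesselJ{\nu}(rs)^2w(r)\,dr$ defining $\Tnu{\nu}$ in \eqref{eq:Tnu}. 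The upper bound $\le$ in \eqref{eq:Walther} on the dense class is then immediate.

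\textbf{Step 3: extension and sharpness.} For general $u_0\in L^2$, approximate by the dense class. Pass to a subsequence converging a.e. in $(x,t)$, which is possible after noting the operator is bounded from $L^2$ into $L^2_{\mathrm{loc}}$ when $\psi$ is truncated. Fatou's lemma then gives the upper bound, and monotone convergence in truncations of $\psi$ and $w$ removes the truncation. For the lower bound, fix $k$ and a level below the essential supremum of $m_k(s)=s^{-1}\psi(s^2)^2\Tnu{k+d/2-1}w(s)$. Choose $a_{k1}$ to be the normalized indicator of a positive-measure set on which $m_k$ exceeds that level, including the case where the level is arbitrarily large. This gives a ratio as close to the supremum as desired.

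\textbf{Main obstacle.} I expect the main difficulty to be the measure-theoretic justification in Step 3. The functions $w$ and $\psi$ are arbitrary nonnegative Borel functions, $\Tnu{\nu}w$ may be infinite, and $\psi(\Hamiltonianfree)u_0$ need not be in $L^2$. My first attempt would be to define everything via the spectral representation of Steps 1--2. I would then obtain the identity for general $u_0$ as a monotone limit of the truncated cases $\min(w,n)\mathbf{1}_{[1/n,n]}$ and $\min(\psi,n)$, where all interchanges of integrals and sums are legitimate.
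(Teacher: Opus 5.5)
The paper does not prove this theorem. It quotes it from \cite[Theorem 4.1]{Wal2002} and uses it as a black box, so there is no internal argument to compare yours against. Your proposal is a correct, self-contained derivation along the standard route, which is essentially Walther's: Plancherel in $t$ turns the left-hand side of \eqref{eq:smoothing free} into a superposition over $s$ of extension operators from the sphere of radius $s$, and Funk--Hecke diagonalizes each of these in spherical harmonics.

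Your constants check out:
\begin{itemize}
\item $\abs{c_{d,k}}=(2\pi)^{-d/2}$, independent of $k$.
\item The time integral gives $\pi\int_0^\infty\abs{h(s)}^2s^{-1}\,ds$.
\item The weight $w(r)r^{d-1}\cdot r^{2-d}$ produces exactly $s^{-1}\Tnu{k+d/2-1}w(s)$ against $(2\pi)^{-d}\abs{a_{kj}(s)}^2s^{d-1}\,ds$, which is the measure appearing in $\norm{u_0}_{L^2}^2$.
\end{itemize}
Your Fatou argument for the upper bound on general $u_0$ also works with untruncated $w$.

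Three small points would tighten the write-up.
\begin{itemize}
\item In Step 2 the cross terms $Y_{kj}\overline{Y_{k'j'}}$ are not nonnegative, so Tonelli does not cover them. Integrate over $\theta$ before $t$, or note that the finite sum is absolutely integrable by Cauchy--Schwarz.
\item In the lower bound, the normalized indicator is not in your smooth dense class, and $\psi$ may be unbounded on the set you pick. Choose the set inside $\{m_k>L\}\cap\{\psi(s^2)\le n\}\cap[1/n,n]$ for some $n$; one of these has positive measure by countable union. For bounded measurable $a$ supported there, the Step 2 identity holds verbatim via pointwise Fourier inversion, Plancherel and Tonelli, even where $\Tnu{k+d/2-1}w=\infty$.
\item The lower bound needs a nonzero degree-$k$ harmonic on $S^{d-1}$ for every $k\geq 0$. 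This is exactly where $d\geq 2$ enters; for $d=1$ only $k\in\{0,1\}$ would occur.
\end{itemize}
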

Notice that \citeauthor{Wal2002}'s formula immediately implies
\begin{equation} \label{eq:d+2 vs d}
	\Constwpd{w}{\psi}{d+2} \leq \Constwpd{w}{\psi}{d}
\end{equation}
for every $d \geq 2$ and $w, \psi \colon \intervaloo{0}{\infty} \to \intervalco{0}{\infty}$, since
\begin{equation}
	\Constwpd{w}{\psi}{d+2} = \sup_{k \in \Z_{\geq 0}} \esssup_{s \in \intervaloo{0}{\infty}} s^{-1} \psi(s^2)^2 \Tnu{k+(d+2)/2-1}w(s) = \sup_{k \in \Z_{\geq 1}} \esssup_{s \in \intervaloo{0}{\infty}} s^{-1} \psi(s^2)^2 \Tnu{k+d/2-1}w(s) .
\end{equation}
Therefore, it is natural to ask if we have
\begin{equation}
	\Constwpd{w}{\psi}{d+1} \lesssim \Constwpd{w}{\psi}{d} .
\end{equation}
Our Theorem \ref{thm:main thm} gives an affirmative answer with an explicit constant.
In fact, Theorem \ref{thm:main thm} implies
\begin{equation}
	\Tnu{\nu}w(s) \leq \Tnu{\nu-1/2}w(s) + \Tnu{\nu+1/2}w(s) \leq 2 \max\{ \Tnu{\nu-1/2}w(s) , \, \Tnu{\nu+1/2}w(s) \}
\end{equation}
for every $w \colon \intervaloo{0}{\infty} \to \intervalco{0}{\infty}$, $\nu \in \intervalco{0}{\infty}$, and $s \in \intervaloo{0}{\infty}$.
Combining this inequality with \citeauthor{Wal2002}'s formula \eqref{eq:Walther}, we obtain
\begin{equation} \label{eq:d+1 vs d with const 2}
	\Constwpd{w}{\psi}{d+1} \leq 2 \Constwpd{w}{\psi}{d} .
\end{equation}
A related refinement is obtained by comparing these constants with the optimal constant
for the Dirac smoothing estimate.
Let $\HamiltonianDirac{m}$ be the Dirac operator of a free particle with mass $m \in \intervalco{0}{\infty}$ on $\R^d$, 
and let $\ConstwpdDirac{w}{\psi}{d}{m} \in \intervalcc{0}{\infty}$ be the optimal constant of the inequality
\begin{equation} \label{eq:smoothing Dirac}
	\int_{(x, t) \in \R^d \times \R} w(\abs{x}) \abs{ \abs{\HamiltonianDirac{m}}^{-1/2} \psi(\Hamiltonianfree) e^{- i t \HamiltonianDirac{m}} u_0(x) }^2 \, dx \, dt \leq C \norm{u_0}_{L^2(\R^d, \C^\dimN)}^{2} ,
\end{equation}
where $\dimN \coloneqq 2^{\lfloor (d+1)/2 \rfloor}$.
\citet{Suz2025} showed the following analogue of Theorem \ref{thm:Walther}.
\begin{theorem}[{\cite[Theorem 1.5]{Suz2025}}]
	Let $m \in \intervalco{0}{\infty}$ and $w \colon \intervaloo{0}{\infty} \to \intervalco{0}{\infty}$.
	For each $\nu \in \R$, we define $\TnumDirac{\nu}{m} w \colon \intervaloo{0}{\infty} \to \intervalcc{0}{\infty}$ by
	\begin{equation} \label{eq:Tnu Dirac}
	\TnumDirac{\nu}{m}w(s) \coloneqq \Tnu{\nu}w(s) + \Tnu{\nu+1}w(s) + \frac{m}{(m^2+s^2)^{1/2}} \abs{ \Tnu{\nu}w(s) - \Tnu{\nu+1}w(s) } ,
	\end{equation}
	where $\Tnu{\nu} w$ is as in \eqref{eq:Tnu}. 
	Then, for every $d \geq 2$ and $\psi \colon \intervaloo{0}{\infty} \to \intervalco{0}{\infty}$, we have
	\begin{equation} \label{eq:Walther Dirac}
		\ConstwpdDirac{w}{\psi}{d}{m} = \sup_{k \in \Z_{\geq 0}} \esssup_{s \in \intervaloo{0}{\infty}} s^{-1} \psi(s^2)^2 \TnumDirac{k+d/2-1}{m}w(s) .
	\end{equation}
\end{theorem}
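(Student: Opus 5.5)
The plan is to follow the route of Walther's proof of Theorem \ref{thm:Walther}, adapted to the matrix structure of $\HamiltonianDirac{m}$. First, apply Plancherel in $t$. Since $e^{-it\HamiltonianDirac{m}}$ is diagonalized by the Fourier transform, with eigenvalues $\pm E(s)$ where $E(s)\coloneqq (m^2+s^2)^{1/2}$ and $s=\abs{\xi}$, the time integral localizes each $u_0$ onto the energy shells $\{E(\abs{\xi})=\lambda\}$, one for positive and one for negative energy. For each energy shell we write $P_\pm(\xi)$ for the spectral projection of the free Dirac symbol onto that shell. The left-hand side of \eqref{eq:smoothing Dirac} then becomes $2\pi$ times an integral over $\lambda$ of quadratic forms $\langle \Gamma_\pm(\lambda) u_0, w\,\Gamma_\pm(\lambda) u_0\rangle$. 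Here $\Gamma_\pm(\lambda)$ is the trace operator onto the sphere $\abs{\xi}=s$, composed with $P_\pm(\xi)$ and with the multiplier $\abs{\HamiltonianDirac{m}}^{-1/2}\psi(\Hamiltonianfree)$.

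After the change of variables $d\lambda = (s/E)\,ds$, the factor $E^{-1}$ coming from $\abs{\HamiltonianDirac{m}}^{-1}$ cancels against the Jacobian $E/s$, leaving $s^{-1}\psi(s^2)^2$. A standard duality argument then identifies the optimal constant with the essential supremum over $s$ of the operator norm of the resulting restriction form on $L^2(S^{d-1},\C^N)$, restricted to the range of $P_\pm$. This step mirrors the scalar case exactly.

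Second, I would decompose into spinor spherical harmonics. The Dirac symbol commutes with the spin-orbit operator, so each range $P_\pm(\xi)L^2(S^{d-1})$ splits into finite-dimensional blocks indexed by $k\in\Z_{\geq 0}$ and the sign of the spin-orbit eigenvalue. On each block, the inverse Fourier transform of a spinor harmonic has radial profiles $r^{1-d/2}\BesselJ{\nu}(rs)$ in one component pair and $r^{1-d/2}\BesselJ{\nu+1}(rs)$ in the other, with $\nu=k+d/2-1$. The form $\int w(\abs{x})\abs{\cdot}^2\,dx$ is then diagonal across blocks. On a normalized positive-energy eigenvector, the weights of the two components are $\alpha^2=(1+m/E)/2$ and $\beta^2=(1-m/E)/2$. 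Hence each block contributes, up to the universal normalization fixed by comparison with the scalar Theorem \ref{thm:Walther},
\begin{equation}
	2\bigl(\alpha^2\Tnu{\nu}w(s)+\beta^2\Tnu{\nu+1}w(s)\bigr) = \Tnu{\nu}w+\Tnu{\nu+1}w+\tfrac{m}{E}\bigl(\Tnu{\nu}w-\Tnu{\nu+1}w\bigr).
\end{equation}
The other spin-orbit sign, or equivalently the negative energy shell, interchanges the roles of $\alpha$ and $\beta$. Taking the maximum of the two gives exactly $\TnumDirac{\nu}{m}w(s)$ with the absolute value. Taking the supremum over $k$ and the essential supremum over $s$ yields \eqref{eq:Walther Dirac}.

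I expect the main obstacle to be the bookkeeping in the second step. One must check that both spin-orbit signs occur for every $k\geq 0$, with the correct pairing of orders $(\nu,\nu+1)$ rather than $(\nu-1,\nu)$ at $k=0$, and in every dimension $d\geq 2$ with $N=2^{\lfloor (d+1)/2\rfloor}$. One must also verify that within a block the form is a scalar multiple of the identity, so that its norm is just the displayed scalar. I would first do this for $d=2,3$ explicitly, and then use a Clifford-algebra construction of spinor harmonics, via the operator $\sigma\cdot\hat\xi$ mapping degree-$k$ harmonics to the partner sector, to handle general $d$. For the lower bound, I would build concentrating test functions in $s$ supported in a single block, exactly as in Walther's argument.
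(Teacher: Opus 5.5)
The paper does not prove this statement. It is quoted from \cite[Theorem 1.5]{Suz2025} and used only as an input for Theorem \ref{thm:dimension-comparison}, so there is no internal proof to compare yours against. On its own terms, your Walther-type argument is correct: Plancherel in $t$, the Jacobian cancellation, and the partial-wave blocks with weights $(1\pm m/E)/2$ give exactly $\TnumDirac{\nu}{m}w(s)$. Two points should be tightened.

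First, derive the factor $2$ rather than fixing it ``by comparison with the scalar theorem''. With the same Fourier conventions, Plancherel in $t$ for $e^{-it\Hamiltonianfree}$ uses $\lambda=s^2$ and $d\lambda=2s\,ds$, which produces the weight $(2s)^{-1}\psi(s^2)^2$. For $\HamiltonianDirac{m}$ you get $(s/E)(E/s)^2E^{-1}=s^{-1}$ instead. The ratio of the two is exactly the $2$ in your displayed block value.

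Second, the bookkeeping you flag as the main obstacle can be done for all $d$ at once via the Fischer decomposition, so there is no need to treat $d=2,3$ first. It works for any Hermitian $\alpha_1,\dots,\alpha_d,\beta$ obeying the anticommutation relations, so the value of $N$ plays no role.
\begin{itemize}
\item Let $\mathcal{H}_k$ be the degree-$k$ spherical harmonics, and $\mathcal{M}_k$ the $\C^N$-valued homogeneous degree-$k$ polynomials annihilated by $\alpha\cdot\nabla$.
\item Two identities do the work: $(\alpha\cdot\nabla)\bigl((x\cdot\alpha)q\bigr)=(d+2k-2)q$ for $q\in\mathcal{M}_{k-1}$, and $\Delta\bigl((x\cdot\alpha)p\bigr)=2(\alpha\cdot\nabla)p$ for harmonic $p$.
\item They give the $L^2(S^{d-1})$-orthogonal splitting $\mathcal{H}_k\otimes\C^N=\mathcal{M}_k\oplus(\omega\cdot\alpha)\mathcal{M}_{k-1}$, together with $(\omega\cdot\alpha)\mathcal{M}_k\subset\mathcal{H}_{k+1}\otimes\C^N$.
\item Hence $L^2(S^{d-1},\C^N)=\bigoplus_{k\geq 0}\bigl(\mathcal{M}_k\oplus(\omega\cdot\alpha)\mathcal{M}_k\bigr)$.
\item Multiplication by $\omega\cdot\alpha$ swaps the two summands isometrically. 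Since $\beta$ anticommutes with every $\alpha_j$, it preserves each summand.
\item On $\mathcal{M}_k^{\beta=\epsilon}\oplus(\omega\cdot\alpha)\mathcal{M}_k^{\beta=\epsilon}$, the normalized symbol $(s\,\omega\cdot\alpha+m\beta)/E$ acts as $\frac{s}{E}\sigma_1+\epsilon\frac{m}{E}\sigma_3$ tensored with the identity.
\item On the same block, the weighted form is a fixed multiple of $\mathrm{diag}(\Tnu{\nu}w(s),\Tnu{\nu+1}w(s))$ with $\nu=k+d/2-1$.
\end{itemize}
This gives, for every $k\geq 0$ at once, both the scalar-on-block property and the $(\nu,\nu+1)$ pairing.

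Finally, you do not need to check that both spin-orbit signs occur for each $k$. On a single block, the negative-energy projection already reverses the sign of the $m/E$ term, and the absolute value comes from that. All you need is $\mathcal{M}_k\neq\{0\}$ for every $k$. This is witnessed by $(x_1+\alpha_1\alpha_2x_2)^k v$ for any nonzero $v\in\C^N$.
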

Combining \eqref{eq:main inequality} and \eqref{eq:Tnu Dirac}, we see that
\begin{equation}
\max\{ \Tnu{\nu-1/2}w(s),\ \Tnu{\nu}w(s) \} \leq \TnumDirac{\nu-1/2}{m}w(s) \leq 2 \max\{ \Tnu{\nu-1/2}w(s),\ \Tnu{\nu+1/2}w(s) \} 
\end{equation}
for every $w \colon \intervaloo{0}{\infty} \to \intervalco{0}{\infty}$, $m \in \intervalco{0}{\infty}$, $\nu \in \intervalco{0}{\infty}$, and $s \in \intervaloo{0}{\infty}$.
Hence, we obtain the following:
\begin{theorem} \label{thm:dimension-comparison}
	We have
	\begin{equation} \label{eq:d+1 vs d with const 2 Dirac}
	\max\{ \Constwpd{w}{\psi}{d}, \, \Constwpd{w}{\psi}{d+1} \} \leq \ConstwpdDirac{w}{\psi}{d}{m} \leq 2 \Constwpd{w}{\psi}{d} 
	\end{equation}
	for every $d \geq 2$, $m \in \intervalco{0}{\infty}$, and $w, \psi \colon \intervaloo{0}{\infty} \to \intervalco{0}{\infty}$.
\end{theorem}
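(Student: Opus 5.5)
Both inequalities in \eqref{eq:d+1 vs d with const 2 Dirac} follow from pointwise bounds on $\TnumDirac{\nu-1/2}{m}w(s)$, which we then pass through the suprema in \eqref{eq:Walther} and \eqref{eq:Walther Dirac}. Fix $d \geq 2$ and $k \in \Z_{\geq 0}$, and set $\nu \coloneqq k + d/2 - 1/2$. Then $\nu - 1/2 = k + d/2 - 1$ is exactly the index appearing in \eqref{eq:Walther Dirac} for dimension $d$, and $\nu \geq 1/2 \geq 0$, so Theorem \ref{thm:main thm} applies at order $\nu$. Note also that $\nu = k + (d+1)/2 - 1$ is the index for dimension $d+1$ in \eqref{eq:Walther}.

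For the lower bound, the factor $m/(m^2+s^2)^{1/2}$ lies in $[0,1]$. Using $A+B+c\abs{A-B} \geq A+B+0 \geq \max\{A,B\}$ for nonnegative $A,B$ (including the value $\infty$, with obvious conventions), we get $\TnumDirac{\nu-1/2}{m}w(s) \geq \max\{\Tnu{\nu-1/2}w(s), \Tnu{\nu+1/2}w(s)\}$. Multiplying by $s^{-1}\psi(s^2)^2$ and taking the esssup over $s$ and the sup over $k$ gives $\Constwpd{w}{\psi}{d} \leq \ConstwpdDirac{w}{\psi}{d}{m}$, from the $\Tnu{k+d/2-1}$ terms. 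It also gives $\Constwpd{w}{\psi}{d+2} \leq \ConstwpdDirac{w}{\psi}{d}{m}$, from the $\Tnu{k+d/2}$ terms. To reach $\Constwpd{w}{\psi}{d+1}$ we need the index $\nu = k+(d-1)/2$, which sits strictly between $\nu - 1/2$ and $\nu + 1/2$. Here we use Theorem \ref{thm:main thm} with $\mu = 0$ and $(a,b) = (1,0)$: $\BesselJ{\nu}(rs)^2 < \BesselJ{\nu-1/2}(rs)^2 + \BesselJ{\nu+1/2}(rs)^2$. Integrating against $\pi r s\, w(r)\,dr$ gives $\Tnu{\nu}w(s) \leq \Tnu{\nu-1/2}w(s) + \Tnu{\nu+1/2}w(s) \leq \TnumDirac{\nu-1/2}{m}w(s)$, valid also when the terms are infinite. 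Taking sups over $s$ and $k$ in \eqref{eq:Walther} for dimension $d+1$ then yields $\Constwpd{w}{\psi}{d+1} \leq \ConstwpdDirac{w}{\psi}{d}{m}$.

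For the upper bound, $c\abs{A-B} \leq \abs{A-B}$ gives $A + B + c\abs{A-B} \leq 2\max\{A,B\}$. Hence $\TnumDirac{k+d/2-1}{m}w(s) \leq 2\max\{\Tnu{k+d/2-1}w(s), \Tnu{k+d/2}w(s)\}$, and both indices $k+d/2-1$ and $(k+1)+d/2-1$ occur in the supremum defining $\Constwpd{w}{\psi}{d}$ in \eqref{eq:Walther}. Taking esssup and sup therefore gives $\ConstwpdDirac{w}{\psi}{d}{m} \leq 2\Constwpd{w}{\psi}{d}$.

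The only delicate points are bookkeeping. First, the esssup of a max is the max of the esssups, which handles the null sets. Second, the case where some $\Tnu{\cdot}w(s)$ is infinite must be treated: there $\abs{A-B}$ may be undefined, so one should interpret the inequality as trivially true, or note that $\TnumDirac{}{}$ is then $\infty$ on both sides. I expect this infinite case to be the main nuisance rather than a genuine obstacle. The substantive input is Theorem \ref{thm:main thm}, already proved.
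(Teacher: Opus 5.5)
Your proposal is correct and is essentially the paper's argument. The paper also derives the pointwise bounds $\max\{\Tnu{\nu-1/2}w(s), \Tnu{\nu}w(s)\} \leq \TnumDirac{\nu-1/2}{m}w(s) \leq 2\max\{\Tnu{\nu-1/2}w(s), \Tnu{\nu+1/2}w(s)\}$ from Theorem~\ref{thm:main thm} and the definition \eqref{eq:Tnu Dirac}, then feeds them into \eqref{eq:Walther} and \eqref{eq:Walther Dirac} with $\nu - 1/2 = k + d/2 - 1$; your remarks on null sets and infinite values make explicit bookkeeping the paper leaves implicit.
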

We note that the inequality \eqref{eq:d+1 vs d with const 2 Dirac} is sharp for every $d \geq 2$ in the following sense: we have
\begin{gather}
\lim_{p \uparrow d} \frac{\ConstwpdDirac{ w_p }{ \psi_p }{d}{0}}{ \max\{ \Constwpd{ w_p }{ \psi_p }{d}, \, \Constwpd{ w_p }{ \psi_p }{d+1} \} } = 1 , \\
\lim_{p \downarrow 1} \frac{\ConstwpdDirac{ w_p }{ \psi_p }{d}{0}}{ \Constwpd{w_p}{\psi_p}{d} } = 2
\end{gather}
for
$
( w_p(r) , \psi_p(s) ) = ( r^{-p}, s^{(2-p)/4} )
$;
see \cite[Theorem 1.6]{BS2017} and \citet[Theorem 1.8]{Suz2025} for details.
Meanwhile, as of this writing, the sharpness of the inequality \eqref{eq:d+1 vs d with const 2} remains open.
Indeed, we do not know if there exists $(w, \psi)$ such that
\begin{equation}
	\Constwpd{w}{\psi}{d} < \Constwpd{w}{\psi}{d+1} \leq 2 \Constwpd{w}{\psi}{d} .
\end{equation}
We remark that the following sufficient condition for
\begin{equation} \label{eq:d+1 vs d with const 1}
	\Constwpd{w}{\psi}{d+1} \leq \Constwpd{w}{\psi}{d} 
\end{equation}
was recently obtained by \citet{Suz2025_2}.
\begin{theorem}[{\cite[Theorem 1.8]{Suz2025_2}}] \label{thm:CM}
	Let $w \colon \intervaloo{0}{\infty} \to \intervalco{0}{\infty}$ be such that
	\begin{equation}
		w(r) = \int_{a \in \intervaloo{0}{\infty}} \exp(- a r^2) \, d\lambda(a)
	\end{equation} 
	for some non-negative Borel measure $\lambda$ on $\intervaloo{0}{\infty}$. 
	Then the function
	\begin{equation}
		\intervalco{0}{\infty} \ni \nu \longmapsto \Tnu{\nu}w(s) \in \intervalcc{0}{\infty}
	\end{equation}
	is non-increasing for each $s \in \intervaloo{0}{\infty}$. 
	Consequently, the inequality \eqref{eq:d+1 vs d with const 1} holds for every $d \geq 2$ and $\psi \colon \intervaloo{0}{\infty} \to \intervalco{0}{\infty}$.
\end{theorem}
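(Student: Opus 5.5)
The plan is to reduce to a single Gaussian weight, compute $\Tnu{\nu}$ of a Gaussian in closed form, and then use monotonicity in the order of the modified Bessel function $I_\nu$.

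\emph{Reduction to a Gaussian.} Everything in \eqref{eq:Tnu} is non-negative, so Tonelli's theorem gives
\begin{equation}
	\Tnu{\nu}w(s) = \int_{a \in \intervaloo{0}{\infty}} \Tnu{\nu}\bigl[\exp(-a\,\cdot^2)\bigr](s) \, d\lambda(a) .
\end{equation}
It therefore suffices to show that $\nu \mapsto \Tnu{\nu}[\exp(-a\,\cdot^2)](s)$ is non-increasing on $\intervalco{0}{\infty}$ for each fixed $a, s > 0$. A non-negative mixture of non-increasing functions is again non-increasing, and this remains true when values equal to $+\infty$ are allowed.

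\emph{Closed form for the Gaussian.} For a Gaussian weight I would use Weber's second exponential integral,
\begin{equation}
	\int_0^\infty r \BesselJ{\nu}(rs)^2 e^{-a r^2} \, dr = \frac{1}{2a} e^{-s^2/(2a)} I_\nu\bigl(s^2/(2a)\bigr), \qquad \nu > -1,
\end{equation}
where $I_\nu$ is the modified Bessel function of the first kind. This gives
\begin{equation}
	\Tnu{\nu}\bigl[\exp(-a\,\cdot^2)\bigr](s) = \frac{\pi s}{2a} e^{-x} I_\nu(x), \qquad x = \frac{s^2}{2a}.
\end{equation}
The claim is thus reduced to the following classical-looking statement: for each fixed $x > 0$, the map $\nu \mapsto I_\nu(x)$ is non-increasing on $\intervalco{0}{\infty}$.

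\emph{Main obstacle: monotonicity of $I_\nu(x)$ in $\nu$.} I expect this step to be the hardest.
\begin{itemize}
	\item The recurrence $I_{\nu-1}(x) - I_{\nu+1}(x) = (2\nu/x) I_\nu(x) > 0$ only compares orders two apart, so it does not settle the continuous parameter.
	\item The power series is not termwise monotone in $\nu$, so it does not help directly either.
	\item My first attempt would be the Schläfli-type representation
	\begin{equation}
		I_\nu(x) = \frac{1}{\pi}\int_0^\pi e^{x\cos\theta}\cos(\nu\theta)\,d\theta - \frac{\sin(\nu\pi)}{\pi}\int_0^\infty e^{-x\cosh t-\nu t}\,dt .
	\end{equation}
	I would then try to show $\partial_\nu I_\nu(x) < 0$ for $\nu > 0$ by differentiating under the integral sign and integrating by parts.
	\item If that becomes messy, the fallback is the ODE approach. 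Differentiate Bessel's modified equation in $\nu$, so that $u = \partial_\nu I_\nu$ solves $x^2 u'' + x u' - (x^2+\nu^2) u = 2\nu I_\nu$. Since $u$ vanishes to higher order at $0^+$ (relative to $I_\nu$), a Wronskian/variation-of-parameters formula expresses $u$ as an integral of $I_\nu^2$ against a kernel of fixed sign. This is the route in Cochran's known result that $\partial_\nu I_\nu(x) < 0$ for $\nu \ge 0$, $x > 0$, which I would also be willing to cite.
\end{itemize}
The case $\nu = 0$ can be handled by continuity.

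\emph{Consequence.} Walther's formula \eqref{eq:Walther} involves orders $k + (d+1)/2 - 1 \ge k + d/2 - 1 \ge 0$ for $d \ge 2$. By the monotonicity just proved, each term on the $\R^{d+1}$ side is bounded pointwise in $s$ by the corresponding term with $k + d/2 - 1$. Taking $\esssup$ and then $\sup_k$ yields $\Constwpd{w}{\psi}{d+1} \le \Constwpd{w}{\psi}{d}$ for every $\psi$.
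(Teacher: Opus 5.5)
The paper does not prove this statement. It is quoted from \cite[Theorem 1.8]{Suz2025_2}, and only the final ``consequently'' is implicit in the text, obtained from \eqref{eq:Walther} in the same way as \eqref{eq:d+2 vs d}. Your last paragraph reproduces that step correctly: the orders satisfy $k+(d+1)/2-1 \ge k+d/2-1 \ge 0$ for $d \ge 2$, and the comparison is pointwise in $s$.

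Your proof of the monotonicity is also correct:
\begin{itemize}
\item Tonelli is legitimate. Since $w$ is finite, $\lambda((0,\beta]) \le e^{\beta r^2} w(r) < \infty$, so $\lambda$ is $\sigma$-finite.
\item Weber's second exponential integral is quoted correctly and applies because $\nu \ge 0 > -1$.
\item After it, the claim is exactly that $\nu \mapsto I_\nu(x)$ is non-increasing on $\intervalco{0}{\infty}$. That is Cochran's theorem (1967), and citing it completes the argument.
\end{itemize}

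The part you should not keep is the justification in your ODE fallback. It is false that $u = \partial_\nu I_\nu$ is small relative to $I_\nu$ at $0^+$: the power series gives $u(x)/I_\nu(x) = \log(x/2) + O(1) \to -\infty$.

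For $\nu > 0$, your Wronskian identity gives the following; the boundary term at $0$ behaves like $x^{2\nu}$ and drops out.
\begin{equation}
	x \bigl( I_\nu u' - u I_\nu' \bigr)(x) = 2\nu \int_0^x I_\nu(t)^2 \, \frac{dt}{t} > 0 .
\end{equation}
So $u/I_\nu$ is strictly increasing in $x$. Since it starts from $-\infty$ at $0^+$, this alone carries no sign information.

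The sign is fixed at the other end. Differentiate the large-$x$ expansion $I_\nu(x) \sim e^x (2\pi x)^{-1/2}\bigl(1 - (4\nu^2-1)/(8x) + \cdots\bigr)$ in $\nu$; this is legitimate because the remainder is uniform and analytic for $\nu$ in complex compact sets. It gives $u/I_\nu = -\nu/x + O(x^{-2}) \to 0$ as $x \to \infty$, hence
\begin{equation}
	u(x) = -2\nu \, I_\nu(x) \int_x^\infty \frac{1}{y \, I_\nu(y)^2} \int_0^y I_\nu(t)^2 \, \frac{dt}{t} \, dy < 0 .
\end{equation}
This is the fixed-sign representation you were after, but it is pinned down by the behaviour at infinity, not at $0^+$. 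At $\nu = 0$ one has $u = -\BesselK{0} < 0$ directly. With this correction, or simply with the citation, your proposal is a complete proof.
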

For example, $w(r) = (1+r^2)^{-p/2}$ satisfies the assumption of Theorem \ref{thm:CM} for every $p \in \intervaloo{0}{\infty}$, since
\begin{equation}
	(1+r^2)^{-p/2}  = \frac{1}{\Gamma(p/2)} \int_{a \in \intervaloo{0}{\infty}} \exp(-a r^2) a^{p/2-1} \exp(-a) \, da .
\end{equation}
	\section*{Acknowledgments}
The author's original motivation was to prove
	\begin{equation} 
\BesselJ{\nu}(r)^2 \leq \BesselJ{\nu-1/2}(r)^2 + \BesselJ{\nu+1/2}(r)^2 
	\end{equation}
	in order to establish Theorem \ref{thm:dimension-comparison}. 
The author would like to thank Timothy Budd for suggesting the more general inequality
	\begin{equation} 
\BesselJ{\mu + \nu}(r)^2 \leq \BesselJ{\nu-1/2}(r)^2 + \BesselJ{\nu+1/2}(r)^2 
	\end{equation}
	on MathOverflow\footnote{\url{https://mathoverflow.net/posts/comments/1292953}}.

\end{document}